\pgfplotsset{compat=1.18}
\newtheorem{theorem}{Theorem}
\newtheorem{problem}[theorem]{Problem}
\newtheorem{lemma}[theorem]{Lemma}
\newtheorem{definition}[theorem]{Definition}
\newtheorem{assumption}[theorem]{Assumption}
\newcommand{\R}{\mathbb{R}}
\newcommand{\Rn}{\R^n}
\begin{document}

\title{A Convex Optimization Approach to the Discrete Hanging Chain Problem}
\author{
{Russell Gabrys}\thanks{Mathematics Department, Walla Walla University}
\and
{Stefan Sremac$^*$}
}
\maketitle

\begin{abstract}
In this paper we investigate the discrete version of the classical hanging chain problem.  We generalize the problem, by allowing for arbitrary mass and length of each link.  We show that the shape of the chain can be obtained by solving a convex optimization problem.  Then we use optimality conditions to show that the problem can be further reduced to solving a single non-linear equation, when the links of the chain have symmetric mass and length.
\end{abstract}


\section{Introduction}
\label{sec:intro}

The hanging chain problem is to determine the shape of a chain that is attached at each end to a level beam and allowed to hang freely.  The earliest record of this classical problem is generally attributed da Vinci in the 16th century, \cite{Buk2008, Bar2004}.  The problem was solved in~1691, independently by Huygens, Leibniz, and Johann Bernoulli using geometry and techniques of the newly developed calculus of variations, \cite{Bar2004}.  In these solutions it is assumed that the chain is really a continuous cable, with uniform mass.  This assumption leads to an elegant, closed-form solution using hyperbolic functions.  The resulting curve is referred to as a \emph{catenary}.

The discrete version of the problem assumes that the chain is composed of $n$ links.  In the most simple version, the links have the same mass and length.  Unlike the continuous version, the discrete problem lacks a closed-form solution and relies on numerical techniques to determine the position of each link.  The problem may be formulated as a constrained continuous optimization problem on $n$ variables, see for instance Section 10.4 of \cite{Luen:84}.  Numerical optimization solvers are then used to obtain a solution.  In~\cite{Wang:1993}, the problem is reduced to a system of non-linear equations, while in~\cite{AgmYiz:2020}, an additional torque equilibrium condition is used to reduce the problem to solving a single non-linear equation.

In \cite{GriVan2005}, the authors consider several generalizations of the continuous cable problem and formulate each variation as a continuous optimization problem. The authors view their contribution as a tutorial and case study in optimization, stressing the importance of modeling and convex formulations.  The resulting optimization problems are solved numerically. 

We consider a more general version of the discrete problem, where the links need not have identical length or mass.  In Section~\ref{sec:form} we formulate the problem as a constrained optimization problem.  In Section~\ref{sec:conv} we show that the optimization problem may be reformulated as a convex optimization problem.  In Section~\ref{sec:symmetric} we prove that the chain hangs symmetrically and show that the optimal solution can be obtained by solving a single non-linear equation, as long as the links are symmetric. We view this contribution, similarly to that of \cite{GriVan2005}, as partly tutorial in nature, emphasizing the power of convex reformulation and analysis.

\section{Formulation of the Problem}
\label{sec:form}

We begin by formally stating the problem.
\begin{problem}
A chain with $n \ge 2$ links having masses $m_1, \dotso, m_n$ and lengths $\ell_1, \dotso, \ell_m$, respectiely, is suspended from a horizontal beam so that the first and last links are attached to the beam a distance~$d$ apart. Determine the shape that the free-hanging chain takes.
\label{prob1}
\end{problem}
The guiding physics principle is that the chain will take the shape that minimizes potential energy.  We follow Section 10.4 of \cite{Luen:84} to set up the optimization problem.  As in~Figure~\ref{fig:link}, let each link have unit length and let $x_i$ and $y_i$ denote the horizontal and vertical distances spanned by link~$i$, respectively.  
\begin{figure}[h]
\label{fig:main}
\begin{subfigure}{0.5\textwidth}
\centering
\includegraphics[width=0.9\textwidth]{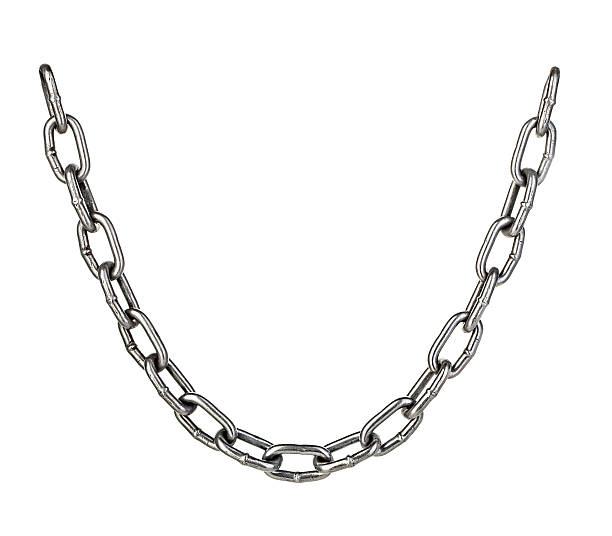}
\caption{Hanging chain with $n$=19 links.}
\end{subfigure}
\begin{subfigure}{0.5\textwidth}
\centering
\begin{tikzpicture}
\begin{scope}[rotate=45]
    \draw (0,0) ellipse (1.5 and 4);
    \draw (0,0) ellipse (0.75 and 3);
\end{scope}
\draw[dotted] (-2.1213,2.1213) -- node[left](){$y_i$}(-2.1213,-2.1213);
\draw[dotted] (-2.1213,-2.1213) -- node[below](){$x_i$}(2.1213,-2.1213);
\draw[dotted] (-2.1213,2.1213) --node[above](){$\ell_i$}(2.1213,-2.1213);
\end{tikzpicture}
\caption{A single link in the chain.}
\label{fig:link}
\end{subfigure}
\end{figure}
We view the chain from left to right so that $x_i\ge 0$ but $y_i$ may be positive, negative, or zero, corresponding to the slope of the link.  The link in~Figure~\ref{fig:link}, for instance, corresponds to~${y_i < 0}$.  If we assume that the center of mass for each link is in the middle of the link, then the potential energy of the first two links is,
\begin{align*}
PE_1 &= m_1g\left( h + \frac 12 y_1 \right) \text{ and } PE_2 = m_2g\left( h + y_1 + \frac 12 y_2 \right),
\end{align*}
where $h$ denotes the height of the beam and $g$ denotes the force due to gravity.  More generally we have,
\[
PE_i = m_ig\left( h + y_1 + \cdots + y_{i-1} + \frac 12 y_i \right),\quad  i \in \{1,\dotso,n\}.
\]
The potential energy of the chain, then, is the sum of the potential energies of the individual links, 
\begin{align*}
PE &= \sum_{i=1}^n PE_i = \sum_{i=1}^n m_ig\left( h + y_1 + \cdots + y_{i-1} + \frac 12 y_i \right) \\
&= gh\sum_{i=1}^n m_i + g(\frac 12 m_1 + m_2 + \cdots + m_n)y_1 +  g(\frac 12 m_2 + m_3 + \cdots + m_n)y_2 + \\
&   \qquad \qquad \cdots + g(\frac 12 m_{n-1} + m_n)y_{n-1} + g(\frac 12 m_n)y_n.
\end{align*}
For simplicity, we let $c_i$ denoted the coefficient of $y_i$, excluding $g$.  That is, for each $i \in \{1,\dotso, n\}$,
\begin{equation}
\begin{split}
c_i := \begin{cases}
\frac 12 m_i + \sum_{j=i+1}^n m_j \quad &\text{if } i < n,\\
\frac 12 m_n &\text{if } i=n.
\end{cases}
\end{split}
\label{eq:ci}
\end{equation}
Then the potential energy of the entire chain is,
\[
PE = gh\sum_{i=1}^n m_i + g \sum_{i=1}^n c_iy_i.
\]
The arrangement that minimizes the potential energy is the solution to the optimization problem,
\begin{equation*}
\min_{y \in \Rn} \left \{ \left( gh\sum_{i=1}^n m_i + g \sum_{i=1}^n c_iy_i \right) \ : \ \sum_{i=1}^ny_i = 0, \ \sum_{i=1}^n \sqrt{\ell_i^2-y_i^2} = d \right \}.
\end{equation*}
The first constraint requires the chain to start and end at the same height and the second constraint ensures that the chain spans the required distance $d$. 

We remove the constant term $gh\sum_{i=1}^n m_i$ and the coefficient $g$ from the objective, since they do not affect the optimal choice of $y$.  These changes yield the simplified optimization problem,
\begin{equation}
\min_{y \in \Rn} \left \{ \sum_{i=1}^n c_iy_i \ : \ \sum_{i=1}^ny_i = 0, \ d - \sum_{i=1}^n \sqrt{\ell_i^2-y_i^2} = 0 \right \} .
\label{eq:main}
\end{equation}
Note that we have rearranged the second constraint for reasons that will be apparent in the upcoming section.  

\section{Convex Formulation}
\label{sec:conv}

Clearly there exists a minimum value for optimization problem \eqref{eq:main}, since the objective is continuous and the feasible region is compact (closed by equality constraints and bounded since~$\lvert y_i \rvert \le \ell_i$). In this section we show that \eqref{eq:main} is equivalent to the optimization problem,
\begin{equation}
\min_{y\in \Rn} \left \{ \sum_{i=1}^n c_iy_i  \ : \ \sum_{i=1}^ny_i = 0, \ d - \sum_{i=1}^n \sqrt{\ell_i^2-y_i^2} \le 0 \right \}.
\label{eq:conv}
\end{equation}
Note that problem \eqref{eq:conv} is an instance of \emph{convex optimization} since the objective is a convex function, the equality constraint is linear, and the inequality constraint is also a convex function.  See, for instance, Chapter 8 of \cite{Beck:2023}.  In fact, the inequality constraint function
\begin{equation}
g(y) := d - \sum_{i=1}^n \sqrt{\ell_i^2-y_i^2},
\label{eq:gfun}
\end{equation}
is \emph{strictly convex}, since it is a sum of single variable functions of the form~${h(x) = - \sqrt{a^2 -x^2}}$, which are strictly convex.  We begin by making the following `common sense' assumptions.

\begin{assumption}
\label{assump:main}
We assume that $m_1,\dotso, m_n$ and $\ell_1,\dotso, \ell_n$ are positive and that $d \in ( \max_i \ell_i, \sum_i \ell_i )$.
\end{assumption}
The bounds on $d$ ensure that it is possible for the chain to span the given distance $d$ and that no single link exceeds the spanning length.  We now state the main result of this section.

\begin{theorem}
\label{thm:conv}
It holds that 
\begin{enumerate}
\item the optimization problem \eqref{eq:conv} is an instance of convex optimization,
\label{itm:conv1}
\item the set of optimal solutions to \eqref{eq:conv} is exactly the set of optimal solutions to \eqref{eq:main} and this set is a singleton.
\label{itm:conv2}
\end{enumerate}
\end{theorem}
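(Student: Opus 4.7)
Part~(\ref{itm:conv1}) is essentially already established by the remarks preceding the theorem: the objective $\sum_i c_i y_i$ is linear, the equality constraint is affine, and the inequality-constraint function $g$ is strictly convex on $\prod_i[-\ell_i,\ell_i]$ as a sum of strictly convex univariate terms $y_i\mapsto-\sqrt{\ell_i^2-y_i^2}$; existence of a minimizer for \eqref{eq:conv} follows from the same compactness argument given for \eqref{eq:main}. My plan for part~(\ref{itm:conv2}) is to prove the one non-trivial fact that every optimizer $y^\star$ of the relaxation~\eqref{eq:conv} in fact satisfies $g(y^\star)=0$; combined with the obvious inclusion of feasible sets, this immediately yields equality of the two optimal sets.

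To establish $g(y^\star)=0$ I would argue by contradiction. Assume $g(y^\star)<0$, i.e., $\sum_i\sqrt{\ell_i^2-(y_i^\star)^2}>d$, and let $J:=\{i:|y_i^\star|<\ell_i\}$ index the ``non-vertical'' links. Using Assumption~\ref{assump:main}, specifically $d>\max_i\ell_i$, each summand with $i\notin J$ vanishes and each summand with $i\in J$ is bounded above by $\ell_i\le\max_i\ell_i<d$, so $|J|\ge 2$. A direct computation from~\eqref{eq:ci} gives $c_i-c_{i+1}=\tfrac12(m_i+m_{i+1})>0$, so the coefficients $c_i$ are strictly decreasing and $c$ restricted to $J$ is non-constant. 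I would then set $v_i=\bar c_J-c_i$ for $i\in J$ and $v_i=0$ otherwise, where $\bar c_J$ is the mean of $c$ over $J$. A one-line check shows $\sum_i v_i=0$ and $\sum_i c_i v_i=-|J|\,\mathrm{Var}_J(c)<0$. Crucially, since $v_i=0$ whenever $|y_i^\star|=\ell_i$, the point $y^\star+tv$ lies in the domain of $g$ for all small $t>0$, and continuity of $g$ together with $g(y^\star)<0$ yields $g(y^\star+tv)\le 0$. Thus $y^\star+tv$ is feasible for~\eqref{eq:conv} with strictly smaller objective value, contradicting optimality.

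Uniqueness then falls out of strict convexity of $g$: if $y^{(1)}\ne y^{(2)}$ were two distinct optimizers, their midpoint $\bar y$ would satisfy $\sum_i\bar y_i=0$ and $g(\bar y)<\tfrac12(g(y^{(1)})+g(y^{(2)}))=0$, while attaining the optimal objective value; this contradicts the preceding claim that every optimizer of~\eqref{eq:conv} lies on $\{g=0\}$. The main obstacle in the argument is not the convexity analysis but ruling out vertical links $|y_i^\star|=\ell_i$ at an interior (in the sense of $g$) optimum, where $g$ has a vertical tangent and naive perturbations escape its domain; this is exactly what the hypothesis $d>\max_i\ell_i$ is used for, via the lower bound $|J|\ge 2$ that lets me support the perturbation entirely on coordinates where $g$ is smooth.
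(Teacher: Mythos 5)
Your proposal is correct and follows essentially the same route as the paper: both prove that any feasible point with $g<0$ admits a feasibility-preserving, objective-decreasing perturbation supported on coordinates with $|y_i|<\ell_i$ (using $d>\max_i\ell_i$ to guarantee at least two such coordinates and the strict monotonicity of the $c_i$ to get strict descent), and both derive uniqueness from strict convexity of $g$ via the midpoint of two putative optima. The only difference is cosmetic: the paper perturbs just two interior coordinates by $\mp\varepsilon$, whereas you use the mean-centered direction $v_i=\bar c_J-c_i$ on all of $J$.
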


\begin{proof}
Item~\ref{itm:conv1} holds by the arguments above and the definition of convex optimization. For item~\ref{itm:conv2}, it suffices to show that the optimal solution occurs when the inequality constraint is satisfied with equality.  To this end, let $\bar y$ be feasible for problem~\eqref{eq:conv} and suppose that the inequality constraint is satisfied strictly at $\bar y$.  Then,
\begin{equation}
 \sum_{i=1}^n \sqrt{\ell_i^2-\bar y_i^2} > d.
\label{eq:sum}
\end{equation}
By Assumption~\ref{assump:main} it holds that for each $i\in \{1,\dotso,n\}$ we have
\[
\sqrt{\ell_i^2-\bar y_i^2} \le \ell_i \le \max_i \ell_i < d.
\]  
It follows that at least two terms in the sum of \eqref{eq:sum} are positive.  Let~${\alpha, \beta \in \{1,\dotso,n\}}$ with $\alpha < \beta$ such that $\bar y_\alpha \in (-\ell_\alpha, \ell_\alpha)$ and $\bar y_\beta \in (-\ell_\beta, \ell_\beta)$. Then there exists~${\varepsilon >0}$ such that $ \hat y \in \Rn$ defined as, 
\[
\hat y_i = \begin{cases}
\bar y_i - \varepsilon \quad &\text{if } i=\alpha, \\
\bar y_i + \varepsilon &\text{if } i=\beta, \\
\bar y_i  &\text{otherwise},
\end{cases}
\]
satisfies the inequality constraint strictly, i.e., $g(\hat y) < 0$. Moreover, our new vector $\hat y$ satisfies the equality constraint since,
\[
\sum_{i=1}^n \hat y_i = \left(\sum_{i \ne \alpha, \beta} \bar y_i\right) + \bar y_\alpha - \varepsilon + \bar y_{\beta} + \varepsilon = \sum_{i=1}^n \bar y_i = 0.
\]
We have shown that $\hat y$ is feasible for \eqref{eq:conv}.  Now we show that $\hat y$ has a lower objective value than $\bar y$.  Indeed,
\begin{align*}
\sum_{i=1}^n c_i \bar y_i - \sum_{i=1}^nc_i \hat y_i &= c_\alpha \bar y_\alpha +c_\beta \bar y_\beta  - c_\alpha(\bar y_\alpha - \varepsilon) - c_\beta(\bar y_\beta + \varepsilon)\\
&= \varepsilon (c_\alpha - c_\beta) > 0.
\end{align*}
The inequality follows from the observation that $c_1 > \cdots > c_n$ from \eqref{eq:ci} and the choice of~${\alpha < \beta}$.  We have shown that $\bar y$ is not optimal. It follows that the optimal solutions of~\eqref{eq:conv} satisfy the inequality constraint tightly.  Therefore the set of optimal solutions for~\eqref{eq:conv} is exactly the same as the set of optimal solutions for~\eqref{eq:main}.

Lastly, to show that the set of optimal solutions is a singleton, suppose $\bar y$ and $\hat y$ are two distinct optimal solutions to \eqref{eq:conv}.  Since this problem is convex, the set of optimal solutions is convex, therefore,~${\tilde y = (\bar y + \hat y)/2}$ is also an optimal solution and distinct from $\bar y$ and $\hat y$.  Moreover, as shown above, the constraint function $g(y)$ as in \eqref{eq:gfun} is strictly convex.  Therefore,
\[
g(\tilde y) < \frac 12 g(\bar y) + \frac 12 g(\tilde y) = 0,
\]
a contradiction to the first part of item~\ref{itm:conv2}.
\end{proof}

\section{Solutions to the Symmetric Case}
\label{sec:symmetric}

Now that we have formulated the problem as a convex optimization problem, we use the Karush-Kuhn-Tucker (KKT) optimality conditions to reduce the optimization problem to solving a single non-linear equation.  Throughout this section we make the additional assumption that the links of the chain are symmetric.  That is, the first link is identical to the $n$th link, the second link is identical to the $(n-1)$th link, and so on.

\begin{assumption}
\label{assump:symm}
We assume that $m_1,\dotso, m_n$ and $\ell_1,\dotso, \ell_n$ are positive, that $d \in ( \max_i \ell_i, \sum_i \ell_i )$, and that
\[
m_i = m_{n+1-i} \text{ and } \ell_i = \ell_{n+1-i}, \quad i \in \{1,\dotso, \left \lfloor \frac n2 \right \rfloor\}.
\]
\end{assumption}
Since \eqref{eq:conv} is convex, the KKT conditions are sufficient, e.g., Theorem~11.12 of \cite{Beck:2023}.  Therefore, for any $(y, \lambda, \mu) \in \Rn \times \R \times \R$ that solve the system of equations below,
\begin{align}
c_i + \lambda + \mu \frac{y_i}{\sqrt{\ell_i^2-y_i^2}}&= 0, \quad i\in \{1,\dotso,n\} 
\label{eq:kkt1} \\
\sum y_i &= 0,
\label{eq:kkt2}\\
d - \sum \sqrt{\ell_i^2-y_i^2} &\le 0 ,
\label{eq:kkt3}\\
\mu &\ge 0,
\label{eq:kkt4}\\
\mu\left( d - \sum \sqrt{\ell_i^2-y_i^2} \right) &= 0.
\label{eq:kkt5}
\end{align}
it holds that $y$ is a solution to \eqref{eq:conv}.  By Theorem~\ref{eq:conv} we know that \eqref{eq:kkt3} is satisfied with equality at optimality.  Therefore, \eqref{eq:kkt5} is satisfied regardless of $\mu$.  Then the system reduces to \eqref{eq:kkt1} - \eqref{eq:kkt4}. Isolating \eqref{eq:kkt1} for $y_i$ we get
\begin{align*}
c_i + \lambda + \mu \frac{y_i}{\sqrt{\ell_i^2 - y_i^2}} = 0 &\implies (c_i + \lambda) \sqrt{\ell_i^2 - y_i^2} = -\mu y_i\\
&\implies (c_i + \lambda)^2 (\ell_i^2 - y_i^2) = (\mu y_i)^2\\
&\implies \ell_i^2 (c_i+\lambda)^2 = y_i^2 ((c_i + \lambda)^2 + \mu^2),
\end{align*}
which simplifies to,
\begin{equation}
y_i = - \frac{\ell_i (c_i + \lambda)}{\sqrt{(c_i + \lambda)^2 + \mu^2}}.
\label{eq:yi}
\end{equation}
The choice of negative sign for $y_i$ is justified by rewriting \eqref{eq:kkt1} as,
\[
\mu \frac{y_i}{\sqrt{\ell_i^2 - y_i^2}} = - (c_i + \lambda),
\]
and observing that the sign of $y_i$ is the same as the sing of $-(c_i + \lambda)$.  Note also, that \eqref{eq:yi} is well defined since $\mu >0$.  Indeed, if $\mu = 0$ then \eqref{eq:kkt1} implies that $c_i = -\lambda$ for all~${i \in \{1,\dotso,n\}}$. Consequently, $m_i = 0$ for $i \in \{1,\dotso,n-1\}$, violating Assumption~\ref{assump:symm}.  

We could substitute \eqref{eq:yi} into \eqref{eq:kkt2} and \eqref{eq:kkt3} to set up a system of nonlinear equations in $\lambda$ and $\mu$.  However, \eqref{eq:yi} may be simplified significantly by considering symmetry.  Intuitively, it makes sense that the chain should hang symmetrically if the links are symmetric.  That is, the position of the first link is opposite that of the $n$th link, and so on.  We define this formally in the following.

\begin{definition}
A vector $y \in \Rn$ that is feasible for \eqref{eq:conv} is said to be symmetric if 
\[
y_i = -y_{n+1-i} \quad  i \in \{ 1,\dotso,\left \lfloor \frac n2 \right \rfloor \},
\]
and $y_{(n+1)/2} = 0$ when $n$ is odd.
\label{def:sym}
\end{definition}

Now, assuming a symmetric chain as in Assumption~\ref{assump:symm}, a symmetric solution as in Definition~\ref{def:sym}, and the case where $n$ is odd we obtain an explicit expression for $\lambda$.  Indeed, considering \eqref{eq:yi} in this context, we have
\begin{align*}
y_{(n+1)/2} = 0 &\implies -\frac{\ell_{(n+1)/2}(c_{(n+1)/2}+\lambda)}{\sqrt{(c_{(n+1)/2} - \lambda)^2 + \mu^2}} = 0 \\
&\implies \ell_{(n+1)/2}(c_{(n+1)/2}+\lambda) = 0\\
&\implies \lambda = -c_{(n+1)/2}. 
\end{align*}
Substituting back into \eqref{eq:yi} we get,
\begin{equation}
y_i = - \frac{\ell_i(c_i - c_{(n+1)/2})}{\sqrt{(c_i - c_{(n+1)/2})^2+\mu^2}}, \qquad i \in \{1,\dotso,n\}.
\label{eq:yiodd}
\end{equation}
Before we prove that this solution is actually optimal, let us first consider the even case.  There is no simple way to derive $\lambda$ in the even case, so we take a cue from the odd case.  Note that $c_{(n+1)/2} = \frac 12 m_{(n+1)/2} + m_{(n+1)/2 + 1} + \cdots + m_n$ is exactly half the mass of the chain.  We propose that~$\lambda$ equal to the negative of half the mass of the chain is suitable in the even case as well.  To this end we consider the solution,
\begin{equation}
y_i = - \frac{\ell_i(c_i - \bar c)}{\sqrt{(c_i - \bar c)^2+\mu^2}}, \qquad i\in \{1,\dotso,n\},
\label{eq:yifinal}
\end{equation}
where $\bar c = \frac 12 (m_1 + \cdots + m_n)$.  We proceed by proving the following technical result.
\begin{lemma}
\label{lem:tech}
Let $\bar c = \frac 12 (m_1 + \cdots + m_n)$. For each $i \in \{1,\dotso,n\}$ it holds that
\[
c_i - \bar c = -(c_{n+1-i} - \bar c).
\]
\end{lemma}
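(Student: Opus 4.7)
The plan is to rephrase the claim as the equivalent identity $c_i + c_{n+1-i} = 2\bar c$, which rearranges immediately to the stated equation since $2\bar c = \sum_{j=1}^n m_j$. Once in this form, the lemma becomes a direct combinatorial rearrangement of the definition \eqref{eq:ci} together with the mass symmetry $m_j = m_{n+1-j}$ from Assumption~\ref{assump:symm}.

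To carry out the computation uniformly across all $i$, I will introduce the tail sum $T_i := \sum_{j=i+1}^n m_j$ with the convention $T_n = 0$. Then \eqref{eq:ci} can be written without case split as $c_i = \tfrac{1}{2}m_i + T_i$ for every $i \in \{1,\dotso,n\}$. Adding the two expressions for $c_i$ and $c_{n+1-i}$ and invoking $m_i = m_{n+1-i}$ gives
\[
c_i + c_{n+1-i} = \tfrac{1}{2}(m_i + m_{n+1-i}) + T_i + T_{n+1-i} = m_i + T_i + T_{n+1-i}.
\]

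The key step is then to show $T_i + T_{n+1-i} = \sum_{j=1}^n m_j - m_i$. Since $T_i$ already sums the masses with indices $i+1,\dotso,n$, it suffices to prove that $T_{n+1-i} = m_1 + \dotso + m_{i-1}$. This is a re-indexing argument: the tail $T_{n+1-i}$ runs over the indices $n+2-i,\dotso,n$, and substituting $m_j = m_{n+1-j}$ term by term turns these masses into $m_{i-1}, m_{i-2}, \dotso, m_1$. Plugging back yields $c_i + c_{n+1-i} = m_i + \bigl(\sum_{j=1}^n m_j - m_i\bigr) = 2\bar c$, which is the required identity.

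The only bookkeeping subtlety arises at the extreme indices $i=1$ and $i=n$, where one of $T_i$ or $T_{n+1-i}$ is an empty sum; the convention $T_n = 0$ absorbs this case uniformly, so no separate argument is needed. I do not anticipate a genuine obstacle beyond this indexing care, as the entire proof is a single symmetry-driven re-indexing of a finite sum.
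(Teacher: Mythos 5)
Your proof is correct and rests on the same key computation as the paper's: re-indexing the tail sum $\sum_{j=n+2-i}^{n} m_j$ into $m_1+\cdots+m_{i-1}$ via the symmetry $m_j = m_{n+1-j}$. Your reformulation as $c_i + c_{n+1-i} = 2\bar c$ and the empty-sum convention $T_n=0$ are only cosmetic differences, though they do let you avoid the separate treatment of $i\in\{1,n\}$ that the paper relegates to ``follows analogously.''
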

\begin{proof}
Let $i \in \{2,\dotso,n-1\}$. Expanding the left hand side we have,
\begin{align*}
c_i - \bar c &= \frac 12 m_i + m_{i+1} + \cdots + m_n - \frac 12 \left( m_1 + \cdots + m_n\right) \\
&= -\frac 12\left( m_1 + \cdots + m_{i-1}\right) + \frac 12 \left(m_{i+1} + \cdots + m_n \right) .
\end{align*}
Expanding the right hand side we have,
\begin{align*}
-(c_{n+1-i} - \bar c) &= -\left( \frac 12 m_{n+1-i} + m_{n+2-i} + \cdots + m_n - \frac 12 \left( m_1 + \cdots + m_n\right) \right) \\
&= -\frac 12 (m_{n+2-i} + \cdots + m_n) + \frac 12 (m_1 + \cdots + m_{n-i})\\
&= -\frac 12\left( m_1 + \cdots + m_{i-1}\right) + \frac 12 \left(m_{i+1} + \cdots + m_n \right),
\end{align*}
where the last step follows from $m_i = m_{n+1-i}$ as in Assumption~\ref{assump:symm}.  The case $i \in \{1,n\}$ follows analogously.
\end{proof}
We now prove the main result.
\begin{theorem}
Let $\bar c = \frac 12 (m_1 + \cdots + m_n)$.  Then there exists unique $\mu > 0$ such that $y\in \Rn$ defined as 
\[
y_i = -\frac{\ell_i(c_i - \bar c)}{\sqrt{(c_i - \bar c)^2+\mu^2}}, \quad i\in \{1,\dotso,n\},
\]
is the unique optimal solution to \eqref{eq:conv} and is symmetric.
\label{thm:symopt}
\end{theorem}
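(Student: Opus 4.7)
The plan is to verify that the candidate $y$, together with the multipliers $\lambda = -\bar c$ and a suitable $\mu > 0$, satisfies the KKT system \eqref{eq:kkt1}--\eqref{eq:kkt5}. Since problem \eqref{eq:conv} is convex, this suffices for optimality, and uniqueness of the optimal solution is already furnished by Theorem~\ref{thm:conv}.

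First I would check symmetry of $y$. Using $\ell_i = \ell_{n+1-i}$ from Assumption~\ref{assump:symm} together with Lemma~\ref{lem:tech}, the substitution $i \mapsto n+1-i$ in the formula for $y_i$ flips the sign of $c_i - \bar c$ while leaving $\ell_i$ and $(c_i-\bar c)^2$ unchanged, so $y_{n+1-i} = -y_i$. In the odd case, $c_{(n+1)/2} = \bar c$ by Lemma~\ref{lem:tech} applied at the middle index, which forces $y_{(n+1)/2}=0$. This immediately gives \eqref{eq:kkt2}, because the terms of $\sum y_i$ cancel in pairs. The stationarity condition \eqref{eq:kkt1} with $\lambda = -\bar c$ follows by a direct substitution: $\sqrt{\ell_i^2 - y_i^2} = \ell_i \mu / \sqrt{(c_i-\bar c)^2 + \mu^2}$, after which $\mu\, y_i / \sqrt{\ell_i^2 - y_i^2}$ collapses to $-(c_i - \bar c)$.

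The substantive step is choosing $\mu$ to make the length constraint \eqref{eq:kkt3} hold with equality, which will simultaneously secure \eqref{eq:kkt4}, \eqref{eq:kkt5}. Using the identity above, the constraint becomes
\[
F(\mu) := \sum_{i=1}^n \frac{\ell_i\, \mu}{\sqrt{(c_i - \bar c)^2 + \mu^2}} = d.
\]
I would study $F$ on $(0,\infty)$. Each summand has derivative $\ell_i (c_i-\bar c)^2 / \bigl((c_i-\bar c)^2 + \mu^2\bigr)^{3/2} \ge 0$, which is strictly positive whenever $c_i \neq \bar c$. Since the $c_i$ are strictly decreasing (by \eqref{eq:ci} with $m_i>0$) and Lemma~\ref{lem:tech} shows $\bar c$ sits strictly between $c_{n/2}$ and $c_{n/2+1}$ in the even case (and equals only $c_{(n+1)/2}$ in the odd case), at least one summand has a strictly positive derivative, so $F$ is strictly increasing and continuous. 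At the endpoints, $\lim_{\mu\to\infty} F(\mu) = \sum_i \ell_i$, while $\lim_{\mu \to 0^+} F(\mu)$ equals $0$ in the even case and $\ell_{(n+1)/2}$ in the odd case. In both cases Assumption~\ref{assump:main} gives $\max_i \ell_i < d < \sum_i \ell_i$, and $\ell_{(n+1)/2} \le \max_i \ell_i$. The intermediate value theorem and strict monotonicity then produce a unique $\mu>0$ with $F(\mu)=d$.

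The main obstacle, to the extent that there is one, is precisely the endpoint analysis of $F(\mu)$ as $\mu \to 0^+$: the odd case must be handled separately because a single summand does not vanish, and one must invoke the bound $\max_i \ell_i < d$ from Assumption~\ref{assump:main} to rule this endpoint out. Once $\mu$ is in hand, all five KKT conditions hold by construction, so $y$ solves \eqref{eq:conv}; uniqueness of this optimizer, and therefore of the corresponding $\mu$, follows from Theorem~\ref{thm:conv}\ref{itm:conv2}.
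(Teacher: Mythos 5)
Your proposal is correct and follows essentially the same route as the paper: verify symmetry via Lemma~\ref{lem:tech}, check the KKT conditions with $\lambda=-\bar c$, and obtain $\mu$ by an intermediate value theorem argument on the length constraint (your $F(\mu)$ is just $d-\phi(\mu)$ for the paper's $\phi$, with the same endpoint limits, the same odd/even case split at $\mu\to 0^+$, and uniqueness of $\mu$ from strict monotonicity of the same derivative computation).
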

\begin{proof}
First we show that $y$ as constructed above is symmetric according to Definition~\ref{def:sym}.  To this end let $i \in \{1,\dotso,n\}$.  Then using Lemma~\ref{lem:tech} and $\ell_i = \ell_{n+1-i}$ as in Assumption~\ref{assump:symm} we have,
\[
y_{n+1-i} = -\frac{\ell_{n+1-i}(c_{n+1-i} - \bar c)}{\sqrt{(c_{n+1-i} - \bar c)^2+\mu^2}} = \frac{\ell_{i}(c_i - \bar c)}{\sqrt{(c_i - \bar c)^2+\mu^2}} = -y_i.
\]
Next we show that $y$ satisfies equations \eqref{eq:kkt1}, \eqref{eq:kkt2}, \eqref{eq:kkt3}, and \eqref{eq:kkt4}.  First, note that \eqref{eq:kkt1} is satisfied by construction and \eqref{eq:kkt2} follows from the fact that $y$ is symmetric.  Now we consider \eqref{eq:kkt3}.  Expanding and simplifying we have,
\begin{align*}
d - \left( \sum_{i=1}^n \sqrt{\ell_i^2 - y_i^2}\right) &= d- \left( \sum_{i=1}^n \sqrt{\ell_i^2 - \frac{\ell_i^2(c_i - \bar c)^2}{(c_i -\bar c)^2+\mu^2}}\right)\\
&= d- \left( \sum \frac{\ell_i \mu}{\sqrt{(c_i -\bar c)^2+\mu^2}}\right)\\
&= d- \mu \left( \sum \frac{\ell_i}{\sqrt{(c_i -\bar c)^2+\mu^2}}\right).
\end{align*}
We now define $\phi : \R \to \R$ as,
\[
\phi(\mu) = d- \mu \left( \sum \frac{\ell_i}{\sqrt{(c_i -\bar c)^2+\mu^2}}\right).
\]
Then \eqref{eq:kkt3} is equivalent to $\phi(\mu) = 0$.  Note that $\phi$ is defined and continuous on $(0, \infty)$, we just need to show that $\phi$ admits a root on $(0, \infty)$. To this end,
\begin{align*}
\lim_{\mu \to \infty} \phi(\mu) &= d - \lim_{\mu \to \infty} \mu \left( \sum \frac{\ell_i}{\sqrt{(c_i -\bar c)^2+\mu^2}}\right)\\
&= d - \left( \sum_{i=1}^n \lim_{\mu \to \infty} \frac{\ell_i}{\sqrt{1+(\frac{c_i -\bar c}{\mu})^2}}\right)\\
&= d - \left( \sum_{i=1}^n \ell_i \right) < 0.
\end{align*}
The inequality is due to the assumed bounds on $d$ in Assumption~\ref{assump:symm}.  Next, we consider the behavior of $\phi$ near $0$.  Note that if $c_i - \bar c \ne 0$ for all $i \in \{1,\dotso,n\}$ then $\phi(0)=d>0$.  On the other hand, if there exists an index $j \in \{1,\dotso,n\}$ such that $c_j - \bar c = 0$ we have that $\phi(0)$ is not defined.  Therefore, we consider the limit.  Note that there can be at most one such index since $c_1 > \cdots > c_n$.  We have,
\begin{align*}
\lim_{\mu \searrow 0} \phi(\mu) &= d - \lim_{\mu \searrow 0}\left( \sum_{i=1}^n \frac{\mu \ell_i}{\sqrt{(c_i -\bar c)^2+\mu^2}}\right)\\
&= d - \lim_{\mu \searrow 0} \left( \sum_{i \ne j}  \frac{\mu \ell_i}{\sqrt{(c_i -\bar c)^2+\mu^2}}\right) - \lim_{\mu \searrow 0} \left( \frac{\mu \ell_j}{\sqrt{\mu^2}}\right)\\
&= d - \ell_j > 0.
\end{align*}
Here also, the inequality follows from Assumption~\ref{assump:symm}.  We have shown that $\lim_{\mu \to \infty} \phi(\mu) < 0$ and that $\lim_{\mu \searrow 0} \phi(\mu) > 0$.  It follows from the intermediate value theorem that there exists a positive~$\mu$ satisfying equation \eqref{eq:kkt3}.  Note that we have also satisfied \eqref{eq:kkt4}. 

Finally, the uniqueness of $y$ follows from Theorem~\ref{thm:conv} and the uniqueness of $\mu$ is due to the monotonicity of $\phi$.  Indeed,
\begin{align*}
\phi ' (\mu) &= \sum_{i=1}^n \frac{-\ell_i \sqrt{(c_i - \bar c)^2 +\mu^2} + \frac{\mu^2 \ell_i}{\sqrt{(c_i -\bar c)^2 +\mu^2}}}{(c_i -\bar c)^2 +\mu^2}\\
&=  \sum_{i=1}^n - \frac{\ell_i (c_i - \bar c)^2}{\left((c_i - \bar c)^2 +\mu^2\right)^{3/2}}.
\end{align*}
Clearly, $\phi'$ is negative, implying the desired result.
\end{proof}

We have shown that solving the discrete hanging chain problem reduces to finding a root of the univariate function $\phi$. This task is significantly aided by the fact that $\phi$ is monotonically decreasing, as shown above.  Moreover, $\phi$ is convex on $(0, \infty)$, since the second derivative,
\[
\phi '' (\mu) = \sum_{i=1}^n \frac{3\ell_i(c_i-\bar c)^2 \mu}{((c_i - \bar c)^2 + \mu^2)^{5/2}},
\]
is positive whenever $\mu > 0$.  It is well known that Newton's method, for instance, performs very well on monotone convex functions.

\bibliographystyle{plain}
\bibliography{mybib}

\end{document}